\theoremstyle{plain}
\newtheorem{theorem}{Theorem}[section]
\newtheorem{lemma}[theorem]{Lemma}
\numberwithin{theorem}{section}
\numberwithin{equation}{section}
\numberwithin{table}{section}
\numberwithin{figure}{section}
\theoremstyle{definition}
\date{}
\begin{document}
\title{A note on the Neuman-S\'{a}ndor Mean}
\author{Tiehong Zhao}
\address{Department of mathematics, Hangzhou Normal University, Hangzhou 310036, P.R. China}%
\email{tiehongzhao@gmail.com}
\author{Yuming Chu}
\address{Department of mathematics, Huzhou Teachers College, Huzhou 313000, P.R. China}%
\email{chuyuming@hutc.zj.cn}
\author{Baoyu Liu}
\address{School of Science, Hangzhou Dianzi University, Hangzhou 310018, P.R. China}%
\email{627847649@qq.com} \subjclass[2000]{26E60}
\keywords{Neuman-S\'{a}ndor mean, harmonic mean, geometric mean,
quadratic mean, contra-harmonic mean.}
\thanks{This research was supported by the Natural Science Foundation of China under Grants 11071069 and 11171307,
and the Innovation Team Foundation of the Department of Education of Zhejiang Province under Grant T200924.}
\date{}
\maketitle

\begin{abstract}
In this article, we present the best possible upper and lower bounds
for the Neuman-S\'{a}ndor mean in terms of the geometric
combinations of harmonic and quadratic means, geometric and
quadratic means, harmonic and contra-harmonic means, and geometric
and contra-harmonic means.
\end{abstract}

\section{Introduction}

For $a,b>0$ with $a\neq b$ the Neuman-S\'{a}ndor mean $M(a,b)$ \cite{NS} is defined by
\begin{equation}\label{eqn:1.1}
M(a,b)=\frac{a-b}{2\sinh^{-1}\left(\frac{a-b}{a+b}\right)},
\end{equation}
where $\sinh^{-1}(x)=\log(x+\sqrt{x^2+1})$ is the inverse hyperbolic sine function.

Recently, the Neuman-S\'{a}ndor mean has been the subject of
intensive research. In particular, many remarkable inequalities for
the Neuman-S\'{a}ndor mean $M(a,b)$ can be found in the literature
\cite{NS,NS2}.

Let $H(a,b)=2ab/(a+b)$, $G(a,b)=\sqrt{ab}$, $L(a,b)=(b-a)/(\log
b-\log a)$, $P(a,b)=(a-b)/(4\arctan\sqrt{a/b}-\pi)$,
$A(a,b)=(a+b)/2$, $T(a,b)=(a-b)/[2\arctan((a-b)/(a+b))]$,
$Q(a,b)=\sqrt{(a^2+b^2)/2}$ and $C(a,b)=(a^2+b^2)/(a+b)$ be the
harmonic, geometric, logarithmic, first Seiffert, arithmetic, second
Seiffert, quadratic and contra-harmonic means of $a$ and $b$,
respectively. Then it is well-known that the inequalities
$$H(a,b)<G(a,b)<L(a,b)<P(a,b)<A(a,b)<M(a,b)<T(a,b)<Q(a,b)<C(a,b)$$
hold true for $a,b>0$ with $a\neq b$.

Neuman and S\'{a}ndor \cite{NS,NS2} established that
$$A(a,b)<M(a,b)<T(a,b),$$
$$P(a,b)M(a,b)<A^2(a,b),$$
$$A(a,b)T(a,b)<M^2(a,b)<[A^2(a,b)+T^2(a,b)]/2$$
hold for all $a,b>0$ with $a\neq b$.

Let $0<a,b<1/2$ with $a\neq b$, $a'=1-a$ and $b'=1-b$. Then the following Ky Fan inequalities
$$\frac{G(a,b)}{G(a',b')}<\frac{L(a,b)}{L(a',b')}<\frac{P(a,b)}{P(a',b')}<\frac{A(a,b)}{A(a',b')}<\frac{M(a,b)}{M(a',b')}<\frac{T(a,b)}{T(a',b')}$$
were presented in \cite{NS}.

Li et al. \cite{LLC} showed that the double inequality
$L_{p_0}(a,b)<M(a,b)<L_2(a,b)$ holds for all $a,b>0$ with
$a\neq b$, where
$L_p(a,b)=\left[(b^{p+1}-a^{p+1})/((p+1)(b-a))\right]^{1/p}
(p\neq-1,0)$, $L_0=1/e(b^b/a^a)^{1/(b-a)}$ and
$L_{-1}(a,b)=(b-a)/(\log b-\log a)$ be the $p-$th generalized
logarithmic mean of $a$ and $b$, and $p_0=1.843\cdots$ is the unique
solution of the equation $(p+1)^{1/p}=2\log(1+\sqrt{2})$.

In \cite{Ne}, Neuman proved that the double inequalities
$$Q^{\alpha}(a,b)A^{1-\alpha}(a,b)<M(a,b)<Q^{\beta}(a,b)A^{1-\beta}(a,b)$$
and
$$C^{\lambda}(a,b)A^{1-\lambda}(a,b)<M(a,b)<C^{\mu}(a,b)A^{1-\mu}(a,b)$$
hold for all $a,b>0$ with $a\neq b$ if and only if $\alpha\leq 1/3$,
$\beta\geq
2\left(\log(2+\sqrt{2})-\log{3}\right)/\log{2}=0.373\cdots$,
$\lambda\leq1/6$ and
$\mu\geq\left(\log(2+\sqrt{2})-\log{3}\right)/\log{2}=0.186\cdots$.

The main purpose of this paper is to find the least values
$\alpha_{1}$, $\alpha_{2}$, $\alpha_{3}$, $\alpha_{4}$, and the
greatest values $\beta_{1}$, $\beta_{2}$, $\beta_{3}$, $\beta_{4}$
such that the double inequalities
$$H^{\alpha_{1}}(a,b)Q^{1-\alpha_{1}}(a,b)<M(a,b)<H^{\beta_{1}}(a,b)Q^{1-\beta_{1}}(a,b),$$
$$G^{\alpha_{2}}(a,b)Q^{1-\alpha_{2}}(a,b)<M(a,b)<G^{\beta_{2}}(a,b)Q^{1-\beta_{2}}(a,b),$$
$$H^{\alpha_{3}}(a,b)C^{1-\alpha_{3}}(a,b)<M(a,b)<H^{\beta_{3}}(a,b)C^{1-\beta_{3}}(a,b)$$
and
$$G^{\alpha_{4}}(a,b)C^{1-\alpha_{4}}(a,b)<M(a,b)<G^{\beta_{4}}(a,b)C^{1-\beta_{4}}(a,b)$$
hold for all $a,b>0$ with $a\neq b$.

\section{Lemmas}

In order to establish our main results we need four lemmas, which we
present in this section.

\medskip
\begin{lemma}\label{le:2.1}
(See \cite{AVV}, Theorem 1.25). For $-\infty<a<b<\infty$, let
$f,g:[a,b]\rightarrow{\mathbb{R}}$ be continuous on $[a,b]$, and be
differentiable on $(a,b)$, let $g'(x)\neq 0$ on $(a,b)$. If
$f^{\prime}(x)/g^{\prime}(x)$ is increasing (decreasing) on $(a,b)$,
then so are
$$\frac{f(x)-f(a)}{g(x)-g(a)}\ \ \mbox{and}\ \ \frac{f(x)-f(b)}{g(x)-g(b)}.$$
If $f^{\prime}(x)/g^{\prime}(x)$ is strictly monotone, then the
monotonicity in the conclusion is also strict.
\end{lemma}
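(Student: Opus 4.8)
The plan is to reduce to the case $g'>0$ on $(a,b)$ and to prove only the ``increasing'' conclusion, the remaining cases following by symmetry. First I would note that, although $g'$ need not be continuous, it enjoys the intermediate value property by Darboux's theorem, so the hypothesis $g'\neq 0$ on $(a,b)$ forces $g'$ to have constant sign there. Replacing the pair $(f,g)$ by $(-f,-g)$ if necessary — an operation that alters neither the ratio $f'/g'$ nor either of the two quotients in the statement — I may assume $g'>0$, so that $g$ is strictly increasing and $g(x)-g(a)>0$ for $x\in(a,b]$, $g(x)-g(b)<0$ for $x\in[a,b)$. The ``decreasing'' assertion will then follow by applying the ``increasing'' case to $(-f,g)$, whose derivative ratio is $-f'/g'$; strictness is carried along at each step.

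Next I would set $\phi=f'/g'$ and $h(x)=\frac{f(x)-f(a)}{g(x)-g(a)}$ on $(a,b]$. Since $g(x)-g(a)\neq 0$, $h$ is differentiable on $(a,b)$, and a direct computation gives
$$h'(x)=\frac{g'(x)}{g(x)-g(a)}\bigl(\phi(x)-h(x)\bigr),$$
so that the sign of $h'(x)$ coincides with the sign of $\phi(x)-h(x)$. The key step is then to bound $h(x)$ by $\phi(x)$: applying Cauchy's mean value theorem to $f$ and $g$ on $[a,x]$ produces $\xi\in(a,x)$ with $h(x)=f'(\xi)/g'(\xi)=\phi(\xi)$, and since $\phi$ is increasing and $\xi<x$ we obtain $h(x)=\phi(\xi)\leq\phi(x)$, hence $h'(x)\geq 0$ on $(a,b)$ and $h$ is increasing. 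If $\phi$ is strictly increasing, then $\phi(\xi)<\phi(x)$, so $h'>0$ and $h$ is strictly increasing.

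For the second quotient $\widetilde h(x)=\frac{f(x)-f(b)}{g(x)-g(b)}$ on $[a,b)$ the argument is symmetric: the same computation yields
$$\widetilde h'(x)=\frac{g'(x)}{g(x)-g(b)}\bigl(\phi(x)-\widetilde h(x)\bigr),$$
but now $g(x)-g(b)<0$, so $\widetilde h'(x)\geq 0$ precisely when $\phi(x)\leq\widetilde h(x)$; applying Cauchy's mean value theorem on $[x,b]$ gives $\eta\in(x,b)$ with $\widetilde h(x)=\phi(\eta)\geq\phi(x)$, and the conclusion follows, with the strict version obtained as above.

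I do not anticipate a genuine obstacle: the only points that require care are the invocation of Darboux's theorem (rather than continuity of $g'$) to fix the sign of $g'$, and the use of the mean value theorem both to pass from ``$h'\geq 0$ on an interval'' to ``$h$ increasing'' and to produce the comparison point $\xi$ (respectively $\eta$) witnessing $h(x)=\phi(\xi)$. The rest is a routine differentiation, and no estimate specific to the Neuman--S\'{a}ndor mean enters at this stage.
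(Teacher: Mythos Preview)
Your argument is correct and is essentially the standard proof of this ``monotone L'H\^opital rule'': fix the sign of $g'$ via Darboux, compute
\[
h'(x)=\frac{g'(x)}{g(x)-g(a)}\bigl(\phi(x)-h(x)\bigr),
\]
and compare $h(x)$ with $\phi(x)$ through Cauchy's mean value theorem. The symmetric treatment of the second quotient and the strictness clause are handled properly.

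The paper itself does not prove this lemma at all; it merely quotes it as Theorem~1.25 of \cite{AVV}. So there is no ``paper's own proof'' to compare against. Your write-up supplies exactly the argument one finds in that reference (and in many expositions of the rule), so nothing is missing and nothing differs in substance.
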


\medskip
\begin{lemma}\label{le:2.2}
(See \cite{SV}, Lemma 1.1). Suppose that the power series
$f(x)=\sum\limits_{n=0}^{\infty}a_{n}x^{n}$ and
$g(x)=\sum\limits_{n=0}^{\infty}b_{n}x^{n}$ have the radius of
convergence $r>0$ and $b_{n}>0$ for all $n\in\{0,1,2,\cdots\}$. Let
$h(x)={f(x)}/{g(x)}$, then

(1) If the sequence $\{a_{n}/b_{n}\}_{n=0}^{\infty}$ is (strictly)
increasing (decreasing), then $h(x)$ is also (strictly) increasing
(decreasing) on $(0,r)$;

(2) If the sequence $\{a_{n}/b_{n}\}$ is (strictly) increasing
(decreasing) for $0<n\leq n_{0}$ and (strictly) decreasing
(increasing) for $n>n_{0}$, then there exists $x_{0}\in(0,r)$ such
that $h(x)$ is (strictly) increasing (decreasing) on $(0,x_{0})$ and
(strictly) decreasing (increasing) on $(x_{0},r)$.
\end{lemma}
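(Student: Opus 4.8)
The plan is to reduce both parts to the sign of the single power series $N(x):=f'(x)g(x)-f(x)g'(x)$, the numerator of $h'=(f'g-fg')/g^{2}$. Since $b_{n}>0$ for every $n$, both $g(x)=\sum_{n\ge0}b_{n}x^{n}$ and $g'(x)=\sum_{n\ge0}(n+1)b_{n+1}x^{n}$ are strictly positive on $(0,r)$, so $h'(x)$ and $N(x)$ have the same sign there. Writing $c_{n}:=a_{n}/b_{n}$ and collecting the coefficient of $x^{m-1}$ in $f'g$ and in $fg'$, I would first record that $N(x)=\sum_{m\ge1}S_{m}x^{m-1}$ with
\[
S_{m}=\sum_{k=1}^{m}k\,(a_{k}b_{m-k}-b_{k}a_{m-k})=\sum_{k=0}^{m}k\,b_{k}b_{m-k}(c_{k}-c_{m-k}).
\]
The decisive manoeuvre is to symmetrize this sum under $k\leftrightarrow m-k$, which produces
\[
S_{m}=\tfrac12\sum_{k=0}^{m}(2k-m)(c_{k}-c_{m-k})\,b_{k}b_{m-k}.
\]

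For part (1), assume $\{c_{n}\}$ is increasing. In each summand the factors $(2k-m)$ and $(c_{k}-c_{m-k})$ share the same sign, since both are governed by whether $k$ exceeds $m-k$, while $b_{k}b_{m-k}>0$; hence every term is nonnegative, giving $S_{m}\ge0$ for all $m$, with $S_{1}=(c_{1}-c_{0})b_{0}b_{1}$. In the strictly increasing case $S_{1}>0$, whence $N(x)\ge S_{1}>0$ on $(0,r)$ and $h$ is strictly increasing. The decreasing case follows by replacing $c_{n}$ with $-c_{n}$.

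For part (2) I would argue by induction on the turning index $n_{0}$, with part (1) as the base case. The engine of the induction is that $u:=f'/g'$ is again a quotient of power series, now with coefficient ratio sequence $\{c_{n+1}\}_{n\ge0}$, that is, the same increasing-then-decreasing shape but with turning index $n_{0}-1$. Thus when $n_{0}=1$ the sequence $\{c_{n+1}\}$ is decreasing and part (1) gives $u$ decreasing on $(0,r)$, while for $n_{0}\ge2$ the induction hypothesis gives a point $x_{1}$ with $u$ increasing on $(0,x_{1})$ and decreasing on $(x_{1},r)$. I then examine the auxiliary function $\psi:=u-h=N/(gg')$, which has the same sign as $h'$. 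Two elementary observations make its sign pattern accessible: as $x\to0^{+}$ we have $\psi\to c_{1}-c_{0}\ge0$, and at any zero $x^{*}$ of $\psi$ we have $h'(x^{*})=0$, so $\psi'(x^{*})=u'(x^{*})$.

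The crux, and the step I expect to be the main obstacle, is to convert these observations into the statement that $\psi$ changes sign at most once, from positive to negative. On $(0,x_{1})$, where $u$ is increasing, a zero of $\psi$ would satisfy $\psi'(x^{*})=u'(x^{*})>0$, an upward crossing incompatible with $\psi(0^{+})\ge0$; hence $\psi>0$ there and $h$ is increasing. On $(x_{1},r)$, where $u$ is decreasing, every zero of $\psi$ satisfies $\psi'(x^{*})=u'(x^{*})\le0$, so only downward crossings occur and at most one is possible. Consequently $\psi$, and hence $h'$, is positive on $(0,x_{0})$ and negative on $(x_{0},r)$ for a single $x_{0}\in(0,r)$, which is exactly the claim; the degenerate cases with no sign change reduce to the monotone conclusion of part (1), the strict versus non-strict distinctions are carried along verbatim, and the decreasing-then-increasing pattern follows via $c_{n}\mapsto-c_{n}$. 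One could alternatively route this last step through the monotone L'H\^{o}pital rule of Lemma~\ref{le:2.1}, but the argument above keeps the induction self-contained.
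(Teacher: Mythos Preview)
The paper does not prove Lemma~\ref{le:2.2} at all; it is quoted verbatim from \cite{SV} with the parenthetical ``See \cite{SV}, Lemma~1.1'' and no argument is given. So there is no proof in the paper to compare against, and your proposal is an independent proof of a cited result.

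Your argument for part~(1) is the standard one and is correct: the symmetrized expression
\[
S_{m}=\tfrac12\sum_{k=0}^{m}(2k-m)(c_{k}-c_{m-k})\,b_{k}b_{m-k}
\]
makes the sign of each coefficient of $N=f'g-fg'$ transparent, and since $g,g'>0$ on $(0,r)$ the sign of $h'$ follows. The observation that $N(x)\ge S_{1}=(c_{1}-c_{0})b_{0}b_{1}>0$ in the strict case is also fine, because every $S_{m}\ge0$ and the series has nonnegative terms for $x>0$.

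For part~(2) your inductive scheme via $u=f'/g'$ (whose coefficient ratios are the shifted sequence $\{c_{n+1}\}$) together with the auxiliary $\psi=u-h$ is a clean and correct route; the key identity $\psi'(x^{*})=u'(x^{*})$ at zeros of $\psi$ does exactly the work you claim. One point deserves a slightly firmer statement: your argument shows $\psi$ has \emph{at most} one sign change (from $+$ to $-$), but the lemma as phrased asserts the existence of $x_{0}\in(0,r)$. You flag this yourself (``the degenerate cases with no sign change reduce to the monotone conclusion of part~(1)''), and indeed the statement in \cite{SV} should be read with that proviso; in the applications in this paper the endpoint limits of $h$ force the sign change, so nothing is lost. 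With that caveat your proof is complete and more informative than the bare citation the paper gives.
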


\medskip
\begin{lemma}\label{le:2.3}
Let
\begin{equation}\label{eqn:2.1}
\phi(t)=\frac{[3-\cosh(2t)][\sinh(2t)-2t]}{2t\sinh^{2}(t)[5+\cosh(2t)]},
\end{equation}
then $\phi(t)$ is strictly decreasing in $(0,\log(1+\sqrt{2}))$,
where $\sinh(t)=(e^t-e^{-t})/2$ and $\cosh(t)=(e^t+e^{-t})/2$ are
respectively the hyperbolic sine and cosine functions.
\end{lemma}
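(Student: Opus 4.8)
The plan is to write $\phi(t)$ as a quotient of two power series in $t$ (or in a suitable variable) and apply Lemma~\ref{le:2.2}, with Lemma~\ref{le:2.1} as a backup if the coefficient-ratio sequence does not turn out to be monotone outright. First I would expand each hyperbolic factor. Using $\cosh(2t)=\sum_{n\ge 0}4^n t^{2n}/(2n)!$, $\sinh(2t)=\sum_{n\ge 0}4^n t^{2n+1}/(2n+1)!$, and $\sinh^2(t)=(\cosh(2t)-1)/2$, the numerator $[3-\cosh(2t)][\sinh(2t)-2t]$ and the denominator $2t\sinh^2(t)[5+\cosh(2t)] = t(\cosh(2t)-1)[5+\cosh(2t)]$ are both even power series in $t$ that vanish to the same order at $t=0$ (the factor $\sinh(2t)-2t$ is $O(t^3)$, and $t(\cosh(2t)-1)$ is $O(t^3)$, so $\phi$ extends analytically with $\phi(0^+)$ finite and positive). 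Dividing out the common $t^3$, I would write $\phi(t)=f(t)/g(t)$ with $f(t)=\sum a_n t^{2n}$, $g(t)=\sum b_n t^{2n}$, $b_n>0$.

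The key computational step is to identify the coefficients $a_n,b_n$ in closed form, or at least their ratio $c_n=a_n/b_n$, well enough to see its monotonicity pattern. I expect $\{c_n\}$ to be eventually decreasing but perhaps not decreasing from $n=0$; in that case part~(2) of Lemma~\ref{le:2.2} gives that $\phi$ first increases then decreases on the full half-line, and I then must check \emph{separately} that on the shorter interval $(0,\log(1+\sqrt2))$ we are already past the turning point $x_0$, i.e. that $\phi$ is decreasing there. Equivalently — and this may be cleaner — I would substitute $t=\log(1+\sqrt2)$ as the right endpoint, note that there $\sinh(2t)$, $\cosh(2t)$ take the explicit values coming from $\sinh t = 1/\sqrt2$... wait, from $e^t=1+\sqrt2$ one gets $\sinh t = (e^t-e^{-t})/2 = ( (1+\sqrt2)-(\sqrt2-1) )/2 = 1$, hence $\cosh t=\sqrt2$, $\cosh(2t)=2\sinh^2 t+1=3$, $\sinh(2t)=2\sinh t\cosh t = 2\sqrt2$. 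So at the right endpoint the factor $3-\cosh(2t)$ vanishes, $\phi(\log(1+\sqrt2))=0$, and $5+\cosh(2t)=8$. This is a strong structural hint: on $(0,\log(1+\sqrt2))$ the factor $3-\cosh(2t)$ is positive and decreasing to $0$.

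Given this, an alternative (and I think more robust) route is to prove monotonicity of $\phi$ directly on $(0,t^\ast)$ with $t^\ast=\log(1+\sqrt2)$ by factoring $\phi(t)=\dfrac{3-\cosh(2t)}{5+\cosh(2t)}\cdot\dfrac{\sinh(2t)-2t}{2t\sinh^2 t}$ and showing each factor is positive and decreasing on $(0,t^\ast)$: the first because $\cosh(2t)$ is increasing, so $(3-u)/(5+u)$ with $u=\cosh(2t)\in(1,3)$ is decreasing in $u$; the second by applying Lemma~\ref{le:2.1} to $\sinh(2t)-2t$ over $2t\sinh^2 t = t(\cosh(2t)-1)$ after reducing to a ratio of derivatives (or again by the power-series Lemma~\ref{le:2.2}, since both $\sinh(2t)-2t$ and $t(\cosh 2t-1)$ have all-positive even-indexed Taylor coefficients after dividing by $t^3$, and the coefficient ratio $\frac{4^{n+1}/(2n+3)!}{\cdot}$ should be monotone). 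Then a product of positive decreasing functions is decreasing, which finishes the lemma.

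The main obstacle I anticipate is the second factor: deciding whether $\dfrac{\sinh(2t)-2t}{t(\cosh(2t)-1)}$ is genuinely monotone on all of $(0,\infty)$ or only on the truncated interval. If the coefficient-ratio sequence from Lemma~\ref{le:2.2}(1) is strictly decreasing throughout, we are done immediately; if instead Lemma~\ref{le:2.2}(2) applies with a turning point $x_0$, I would need the numerical/analytic check that $t^\ast=\log(1+\sqrt2)<x_0$, or equivalently that the derivative of that factor is negative at $t^\ast$ and has no sign change before it — a finite, explicit computation using $\sinh t^\ast=1,\ \cosh t^\ast=\sqrt2$. Everything else (the expansions, the positivity of $b_n$, the endpoint values) is routine.
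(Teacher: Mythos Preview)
Your factorization route works and is cleaner than the paper's argument. For the second factor $\dfrac{\sinh(2t)-2t}{t(\cosh(2t)-1)}$, after cancelling $t^3$ the coefficient ratio is $\dfrac{2^{2n+3}/(2n+3)!}{2^{2n+2}/(2n+2)!}=\dfrac{2}{2n+3}$, strictly decreasing in $n$, so Lemma~\ref{le:2.2}(1) gives strict decrease on all of $(0,\infty)$ --- your anticipated ``main obstacle'' evaporates. The first factor $(3-\cosh 2t)/(5+\cosh 2t)$ is positive and strictly decreasing on $(0,t^\ast)$ for exactly the reason you state, and a product of positive strictly decreasing functions is strictly decreasing.

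The paper instead carries out your Route~1 on the undivided $\phi$, and there your expectation about the shape is reversed: the full coefficient ratio $c_n=a_n/b_n$ is \emph{decreasing for $0\le n\le 2$ and increasing for $n\ge 2$}, so Lemma~\ref{le:2.2}(2) yields $\phi$ strictly decreasing on $(0,t_0)$ and increasing on $(t_0,\infty)$. The paper then pins down $t_0>t^\ast$ by computing $\phi'(t^\ast)<0$ explicitly from the closed values $\sinh t^\ast=1$, $\cosh t^\ast=\sqrt2$, $\cosh 2t^\ast=3$, etc., which you already identified. Your factorization bypasses both the non-monotone coefficient analysis and this endpoint derivative check, at no cost.
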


\begin{proof}
Let us denote by  $\phi_{1}(t)$ and  $\phi_{2}(t)$ respectively the
numerator  and denominator of (2.1) expand the factor to obtain
\begin{equation}\label{eqn:2.2}
\phi_{1}(t)=3\sinh(2t)-6t+2t\cosh(2t)-\frac{1}{2}\sinh(4t),
\end{equation}
\begin{equation}\label{eqn:2.3}
\phi_{2}(t)=\frac{t}{2}\left[8\cosh(2t)+\cosh(4t)-9\right].
\end{equation}

Using the power series
$\sinh(t)=\sum_{n=0}^{\infty}t^{2n+1}/(2n+1)!$ and
$\cosh(t)=\sum_{n=0}^{\infty}t^{2n}/(2n)!$, we can express
(\ref{eqn:2.2}) and (\ref{eqn:2.3}) as follows
\begin{equation}\label{eqn:2.4}
\phi_{1}(t)=\sum_{n=1}^{\infty}\frac{2^{2n+1}(2n+4-2^{2n})}{(2n+1)!}t^{2n+1}=t^3\sum_{n=0}^{\infty}\frac{2^{2n+4}(n+3-2^{2n+1})}{(2n+3)!}t^{2n},
\end{equation}
\begin{equation}\label{eqn:2.5}
\phi_{2}(t)=\sum_{n=1}^{\infty}\frac{2^{2n}(4+2^{2n-1})}{(2n)!}t^{2n+1}=t^3\sum_{n=0}^{\infty}\frac{2^{2n+4}(1+2^{2n-1})}{(2n+2)!}t^{2n}.
\end{equation}

It follows from (\ref{eqn:2.4}) and (\ref{eqn:2.5}) that
\begin{equation}\label{eqn:2.6}
\phi(t)=\frac{\sum\limits_{n=0}^{\infty}a_{n}t^{2n}}{\sum\limits_{n=0}^{\infty}b_{n}t^{2n}}
\end{equation}
with $a_{n}=2^{2n+4}(n+3-2^{2n+1})/(2n+3)!$ and
$b_{n}=2^{2n+4}(1+2^{2n-1})/(2n+2)!$.

Let $c_{n}=a_{n}/b_{n}$, then simple computations lead to
\begin{equation*}
c_{n}=\frac{(n+3)-2^{2n+1}}{(2n+3)(1+2^{2n-1})},
\end{equation*}
\begin{equation}\label{eqn:2.7}
c_{0}=\frac{2}{9}>c_{1}=-\frac{4}{15}>c_{2}=-\frac{3}{7}<c_{3}=-\frac{122}{297},
\end{equation}
\begin{align}\label{eqn:2.8}
c_{n+1}-c_{n}=&\frac{2^{4n+3}-(6n^2+57n+76)2^{2n-1}-3}{(2n+3)(2n+5)(1+2^{2n-1})(1+2^{2n+1})}\nonumber\\
=&\frac{[2(4^{n}-38)+6(4^n-n^2)+(128\times
4^{n-2}-57n)]2^{2n-1}-3}{(2n+3)(2n+5)(1+2^{2n-1})(1+2^{2n+1})}>0
\end{align}
for all $n>2$.

Inequalities (\ref{eqn:2.7}) and (\ref{eqn:2.8}) implies that the
sequence $\{a_{n}/b_{n}\}$ is strictly decreasing in $0<n\leq 2$ and
strictly increasing for $n>2$, then from  (\ref{eqn:2.6}) and Lemma
2.2(2) we know that there exists $t_{0}>0$ such that $\phi(t)$ is
strictly decreasing on $(0,t_{0})$ and strictly increasing in
$(t_{0},\infty)$.

For convenience, let us denote
$t^{\star}=\log(1+\sqrt{2})=0.881\cdots$, then we have
\begin{equation}\label{eqn:2.9}
\sinh(t^*)=1,\quad \sinh(2t^*)=2\sqrt{2},\quad \sinh(3t^*)=7,
\end{equation}
\begin{equation}\label{eqn:2.10}
\cosh(t^*)=\sqrt{2},\quad \cosh(2t^*)=3,\quad \cosh(3t^*)=5\sqrt{2}.
\end{equation}
Differentiating (\ref{eqn:2.1}) yields
\begin{equation}\label{eqn:2.11}
{\phi}'(t)=\frac{{\phi_{1}}'(t){\phi_{2}}(t)-{\phi_{1}}(t){\phi_{2}}'(t)}{{\phi_{2}}^2(t)},
\end{equation}
where
\begin{equation}\label{eqn:2.12}
\phi_{1}'(t)=8\sinh(t)[t\cosh(t)-2\sinh^{3}(t)],
\end{equation}
\begin{equation}\label{eqn:2.13}
\phi_{2}'(t)=\sinh(t)[20t\cosh(t)+4t\cosh(3t)+9\sinh(t)+\sinh(3t)].
\end{equation}

From (\ref{eqn:2.2}) and (\ref{eqn:2.3}) together with
(\ref{eqn:2.9})-(\ref{eqn:2.13}) we get
\begin{equation}\label{eqn:2.14}
{\phi}'(t^*)=-\frac{\sqrt{2}-t^*}{\sqrt{2}t^*}<0.
\end{equation}

It follows from the piecewise monotonicity of $\phi(t)$ and
(\ref{eqn:2.14}) that $t_{0}>t^*$. This completes the proof of Lemma
2.3.
\end{proof}

\medskip
\begin{lemma}\label{le:2.4}
Let $p\in[0,1)$, and
\begin{equation}\label{eqn:2.15}
\varphi_{p}(t)=\log(1+x^2)-\log\frac{x}{\sinh^{-1}(x)}+p\left[\frac{1}{2}\log(1-x^2)-\log(1+x^2)\right].
\end{equation}
Then $\varphi_{5/9}(x)<0$ and $\varphi_{0}(x)>0$ for all
$x\in(0,1)$.
\end{lemma}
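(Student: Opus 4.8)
The plan is to dispatch $\varphi_0>0$ by an elementary monotonicity argument and to spend the real effort on $\varphi_{5/9}<0$.

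\emph{The inequality $\varphi_0(x)>0$.} Since $\varphi_0(x)=\log\dfrac{(1+x^2)\sinh^{-1}(x)}{x}$, the claim is equivalent to $(1+x^2)\sinh^{-1}(x)>x$. Put $u(x)=(1+x^2)\sinh^{-1}(x)-x$; then $u(0)=0$ and
$$u'(x)=2x\sinh^{-1}(x)+\sqrt{1+x^2}-1>0\qquad(x>0),$$
so $u>0$, hence $\varphi_0>0$ (indeed on all of $(0,\infty)$; the restriction $x<1$ is not used here).

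\emph{The inequality $\varphi_{5/9}(x)<0$.} Rewrite $\varphi_{5/9}(x)=\tfrac49\log(1+x^2)+\tfrac5{18}\log(1-x^2)+\log\dfrac{\sinh^{-1}(x)}{x}$. A short expansion shows $\varphi_{5/9}(0^+)=0$; in fact the value $5/9$ is chosen precisely so that the coefficient of $x^2$ in the Taylor expansion at $0$ cancels (the expansion begins $-\tfrac3{10}x^4+\cdots$). Hence it suffices to prove $\varphi_{5/9}'(x)<0$ on $(0,1)$ and then integrate. Differentiating and combining the two rational terms $\tfrac{8x}{9(1+x^2)}$ and $-\tfrac{5x}{9(1-x^2)}$ over a common denominator gives
$$\varphi_{5/9}'(x)=\frac{x(3-13x^2)}{9(1-x^4)}+\frac{1}{\sqrt{1+x^2}\,\sinh^{-1}(x)}-\frac1x.$$
To clear the $\sqrt{1+x^2}$ and the $\sinh^{-1}$, I substitute $x=\sinh t$ with $t\in(0,\log(1+\sqrt{2}))$, so that $\sqrt{1+x^2}=\cosh t$, $\sinh^{-1}(x)=t$, and $1-x^4=(1-\sinh^2t)\cosh^2t$. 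Multiplying through by the positive quantity $9t\sinh t\cosh^2t\,(1-\sinh^2t)$ turns $\varphi_{5/9}'(\sinh t)<0$ into
$$h(t):=t\bigl(9-3\sinh^2t+4\sinh^4t\bigr)-9\sinh t\cosh t\,(1-\sinh^2t)>0.$$

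It then remains to prove $h(t)>0$ on $(0,\log(1+\sqrt{2}))$, and here I would use a power-series argument rather than pointwise estimates, since $h$ vanishes to order $t^5$ at the origin. Using $\sinh^2t=\tfrac12(\cosh2t-1)$, $\sinh^4t=\tfrac18(\cosh4t-4\cosh2t+3)$ and $2\sinh t\cosh t=\sinh2t$, one rewrites
$$8h(t)=4t\cosh4t-28t\cosh2t+96t-54\sinh2t+9\sinh4t.$$
Expanding each hyperbolic function in its Maclaurin series, the coefficient of $t^{2n+1}$ in $8h(t)$ is $0$ for $n=0$ and, for $n\ge1$, equals
$$\frac{4\cdot4^{n}(4^{n}-7)}{(2n)!}+\frac{36\cdot4^{n}(4^{n}-3)}{(2n+1)!},$$
which still vanishes at $n=1$ and is strictly positive for every $n\ge2$ (as $4^{n}\ge16>7$). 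Thus $8h(t)$ is a power series with only nonnegative coefficients, so $h(t)>0$ for all $t>0$; in particular $h>0$ on $(0,\log(1+\sqrt{2}))$, whence $\varphi_{5/9}'<0$ on $(0,1)$ and $\varphi_{5/9}(x)<\varphi_{5/9}(0^+)=0$.

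The only delicate point is this fourth-order flatness of $\varphi_{5/9}$ at $0$: because of it, neither a crude sign analysis of $\varphi_{5/9}'$ nor a two-term estimate will close the gap near the origin, so one must find the representation in which all the cancellation is already built in — the reduction to $h$ followed by the multiple-angle rewrite, after which positivity becomes coefficientwise and the bound $t<\log(1+\sqrt{2})$ is not even needed in the last step. (A workable but messier alternative is to split $(0,1)$ at $x=\sqrt{3/13}$: on $[\sqrt{3/13},1)$ both summands of $\varphi_{5/9}'$ are negative, since $3-13x^2\le0$ and $\sqrt{1+x^2}\,\sinh^{-1}(x)>x$ (equivalently $\sinh^{-1}(x)>\tanh(\sinh^{-1}(x))$), while on $(0,\sqrt{3/13})$ one replaces $\sinh^{-1}(x)$ and $\sqrt{1+x^2}$ by truncated-series lower and upper bounds and is left with an elementary polynomial inequality.)
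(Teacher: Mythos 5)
Your proof is correct, and for the hard half it takes a genuinely different route from the paper. For $\varphi_0>0$ the paper also reduces to a sign statement for $\varphi_0'$, passing through the factorization $\phi_0(x)/(1-x^2)=x(1+x^2)-(1-x^2)\sqrt{1+x^2}\,\sinh^{-1}(x)$ and two rounds of differentiation; your direct observation that $\varphi_0>0$ is literally the inequality $(1+x^2)\sinh^{-1}(x)>x$, settled by one derivative of $u(x)=(1+x^2)\sinh^{-1}(x)-x$, is shorter and avoids the derivative of $\varphi_0$ altogether. For $\varphi_{5/9}<0$ both arguments establish $\varphi_{5/9}'<0$ on $(0,1)$ and integrate from $\varphi_{5/9}(0^+)=0$, but the mechanisms differ: the paper stays in the $x$-variable, writes $\varphi_{5/9}'(x)=\phi_{5/9}(x)/\bigl[x(1-x^4)\sqrt{1+x^2}\sinh^{-1}(x)\bigr]$ with $\phi_{5/9}(x)=x-x^5-\bigl[1-\tfrac13x^2+\tfrac49x^4\bigr]\sqrt{1+x^2}\sinh^{-1}(x)$, and differentiates twice more until the sign becomes visible term by term ($\phi_{5/9}'(x)=-xf(x)/(9\sqrt{1+x^2})$ with $f(0)=0$, $f'>0$); you instead substitute $x=\sinh t$, clear the positive denominator $9t\sinh t\cosh^2t\,(1-\sinh^2t)$, convert to multiple angles, and check that $8h(t)=4t\cosh4t-28t\cosh2t+96t-54\sinh2t+9\sinh4t$ has nonnegative Maclaurin coefficients (I verified the coefficient formula $4\cdot4^n(4^n-7)/(2n)!+36\cdot4^n(4^n-3)/(2n+1)!$, its vanishing at $n=0,1$, and its positivity for $n\ge2$). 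Your coefficientwise argument is in the same spirit as the paper's own Lemma 2.3 and has the pleasant feature that positivity of $h$ holds on all of $(0,\infty)$, with the restriction $\sinh t<1$ needed only to fix the sign of the cleared denominator; the paper's repeated differentiation is more elementary in that it never leaves the realm of explicit algebraic-logarithmic expressions. Either proof is complete; your parenthetical aside about a "crude sign analysis" failing is slightly overstated (both proofs are sign analyses of $\varphi_{5/9}'$), but this does not affect correctness.
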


\begin{proof}
From (\ref{eqn:2.15}) one has
\begin{equation}\label{eqn:2.16}
\varphi_{p}(0^+)=0,
\end{equation}
\begin{equation}\label{eqn:2.17}
\varphi_{p}'(x)=\frac{\phi_{p}(x)}{x(1-x^4)\sqrt{1+x^2}\sinh^{-1}(x)},
\end{equation}
where
\begin{equation}\label{eqn:2.18}
\phi_{p}(x)=x-x^5-[1+(3p-2)x^2+(1-p)x^4]\sqrt{1+x^2}\sinh^{-1}(x).
\end{equation}

We divide the proof into two cases.

{\bf Case 1} $p=5/9$. Then (\ref{eqn:2.18}) leads to
\begin{equation}\label{eqn:2.19}
\phi_{5/9}(0)=0,
\end{equation}
\begin{equation}\label{eqn:2.20}
\phi_{5/9}'(x)=-\frac{xf(x)}{9\sqrt{1+x^2}},
\end{equation}
where
\begin{equation}\label{eqn:2.21}
f(x)=x(49x^2-3)\sqrt{1+x^2}+(3+7x^2+20x^4)\sinh^{-1}(x),
\end{equation}
\begin{equation}\label{eqn:2.22}
f(0)=0.
\end{equation}
Differentiating (\ref{eqn:2.21}) yields
\begin{equation}\label{eqn:2.23}
f'(x)=\frac{2x[74x+108x^3+(7+40x^2)\sqrt{1+x^2}\sinh^{-1}(x)]}{\sqrt{1+x^2}}>0
\end{equation}
for $x\in(0,1)$.

Therefore, $\phi_{5/9}(x)<0$ for all $x\in(0,1)$ follows easily from
(\ref{eqn:2.19}) and (\ref{eqn:2.20}) together with (\ref{eqn:2.22})
and (\ref{eqn:2.23}).

\medskip

{\bf Case 2} $p=0$. Then (\ref{eqn:2.18}) yields
\begin{equation}\label{eqn:2.24}
\frac{\phi_{0}(x)}{1-x^2}=x(1+x^2)-(1-x^2)\sqrt{1+x^2}\sinh^{-1}(x):=g(x),
\end{equation}
\begin{equation}\label{eqn:2.25}
g(0)=0.
\end{equation}
Differentiating (\ref{eqn:2.24}) we get
\begin{equation}\label{eqn:2.26}
g'(x)=\frac{x[4x\sqrt{1+x^2}+(1+3x^2)\sinh^{-1}(x)]}{\sqrt{1+x^2}}>0
\end{equation}
for $x\in(0,1)$

Therefore, $\varphi_{0}(x)>0$ for $x\in(0,1)$ easily from
(\ref{eqn:2.16}) and (\ref{eqn:2.17}) together with
(\ref{eqn:2.24})-(\ref{eqn:2.26}).

\medskip

\end{proof}

\section{Bounds for the Neuman-S\'{a}ndor Mean}

In this section we will deal with problems of finding sharp bounds
for the Neuman-S\'{a}ndor Mean $M(a,b)$ in terms of the geometric
combinations of harmonic mean $H(a,b)$ and quadratic mean $Q(a,b)$,
geometric mean $G(a,b)$ and quadratic mean $Q(a,b)$, harmonic mean
$H(a,b)$ and contra-harmonic mean $C(a,b)$, and geometric mean
$G(a,b)$ and contra-harmonic mean $C(a,b)$.

Since $H(a,b)$, $G(a,b)$, $M(a,b)$, $Q(a,b)$ and $C(a,b)$ are
symmetric and homogeneous of degree $1$. Without loss of generality,
we assume that $a>b$. For the later use we denote
$x=(a-b)/(a+b)\in(0,1)$ and $t=\sinh^{-1}(x)\in(0,t^*)$ with
$t^*=\log(1+\sqrt{2})=0.881\cdots$.

\bigskip
\begin{theorem}\label{th:3.1}The double inequality
\begin{equation}\label{eqn:3.1}
H^{\alpha}(a,b)Q^{1-\alpha}(a,b)<M(a,b)<H^{\beta}(a,b)Q^{1-\beta}(a,b)
\end{equation}
holds true for all $a,b>0$ with $a\neq b$ if and only if $\alpha\geq
2/9$ and $\beta\leq 0$.
\end{theorem}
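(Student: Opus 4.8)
The plan is to reduce the double inequality to a statement about a single‑variable function and then invoke the lemmas already established. Using the homogeneity and symmetry of all the means involved, set $x=(a-b)/(a+b)\in(0,1)$ and $t=\sinh^{-1}(x)\in(0,t^\ast)$. A direct computation gives $M(a,b)/A(a,b)=x/\sinh^{-1}(x)$, $H(a,b)/A(a,b)=1-x^2$, and $Q(a,b)/A(a,b)=\sqrt{1+x^2}$. Hence, dividing \eqref{eqn:3.1} by $A(a,b)$, the inequality $H^{p}Q^{1-p}<M$ is equivalent to
\[
p\log(1-x^2)+\frac{1-p}{2}\log(1+x^2)<\log\frac{x}{\sinh^{-1}(x)},
\]
and after rearranging, to $\varphi_{p}(x)<0$ where $\varphi_p$ is exactly the function in \eqref{eqn:2.15}; similarly $M<H^{p}Q^{1-p}$ is equivalent to $\varphi_p(x)>0$.

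With this translation in hand, the sufficiency of the stated ranges follows almost immediately. For $\beta\le 0$, monotonicity in $p$ of $\varphi_p(x)$ (the coefficient $\tfrac12\log(1-x^2)-\log(1+x^2)$ multiplying $p$ is negative on $(0,1)$) together with $\varphi_0(x)>0$ from Lemma~\ref{le:2.4} gives $\varphi_\beta(x)\ge\varphi_0(x)>0$, i.e.\ $M<H^{\beta}Q^{1-\beta}$. For $\alpha\ge 2/9$ I would first show the key case $\alpha=2/9$: here I rewrite $M/(H^{2/9}Q^{7/9})$ as a function of $t$ and show it is monotone so that its supremum over $(0,t^\ast)$ is attained at the endpoint $t^\ast$ (equivalently $x=1/\sqrt2$, i.e.\ $a/b=3+2\sqrt2$), where equality holds in the limit. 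This is where Lemma~\ref{le:2.3} enters: after taking logarithmic derivative, the sign of $\bigl(\log(M/(H^{2/9}Q^{7/9}))\bigr)'$ should reduce — via the identities $\sinh(t^\ast)=1$, $\cosh(2t^\ast)=3$ — to the sign of an expression governed by $\phi(t)$, and the fact that $\phi$ is strictly decreasing on $(0,t^\ast)$ with the appropriate endpoint value pins down $2/9$ as the critical exponent. Then for general $\alpha\ge 2/9$, monotonicity in $p$ again upgrades $\varphi_{2/9}(x)<0$ to $\varphi_\alpha(x)<0$.

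For necessity I would argue that no smaller $\alpha$ and no larger $\beta$ work by examining the two ends of the interval $x\in(0,1)$. As $x\to 0^+$, Taylor expansion of $\varphi_p$ shows $\varphi_p(x)\sim\bigl(\tfrac13-\tfrac{2}{3}p\cdot\text{(something)}\bigr)x^2$ — more precisely I would compute the leading coefficient and find it forces $\beta\le 0$ from the requirement $\varphi_\beta>0$ near $0$; and as $x\to 1^-$ (equivalently $t\to t^\ast$) the boundary value computation $M(a,b)=Q^{2/9}\cdots$ at $a/b=3+2\sqrt2$ forces $\alpha\ge 2/9$ from the requirement $\varphi_\alpha<0$ up to the endpoint. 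The main obstacle I anticipate is the middle step: verifying that the single‑variable function $t\mapsto\log M-\tfrac29\log H-\tfrac79\log Q$ is actually monotone on $(0,t^\ast)$, i.e.\ massaging its derivative into a form where Lemma~\ref{le:2.3} applies cleanly. The algebra relating $\varphi_p'$, the auxiliary $\phi_p$ in \eqref{eqn:2.18}, and the function $\phi$ of Lemma~\ref{le:2.3} is delicate, and getting the bookkeeping right — particularly confirming that the relevant derivative changes sign exactly once, or not at all, on the restricted interval — is the crux of the proof.
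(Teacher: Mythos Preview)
Your reduction to one variable and the idea of invoking Lemma~\ref{le:2.3} are right, but there are two concrete errors that derail the argument as written.

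First, the identification with Lemma~\ref{le:2.4} is false. After dividing by $A$ and taking logarithms, the quantity $p\log H + (1-p)\log Q - \log M$ becomes
\[
p\log(1-x^2)+\tfrac{1-p}{2}\log(1+x^2)-\log\frac{x}{\sinh^{-1}(x)},
\]
whereas the function in \eqref{eqn:2.15} is $(1-p)\log(1+x^2)+\tfrac{p}{2}\log(1-x^2)-\log\frac{x}{\sinh^{-1}(x)}$, which encodes $p\log G+(1-p)\log C-\log M$. Lemma~\ref{le:2.4} is tailored to Theorem~3.4 (the $G$--$C$ bound with critical exponent $5/9$), not to Theorem~\ref{th:3.1}; in particular $\varphi_{5/9}<0$ and $\varphi_0>0$ tell you nothing directly about $H^{2/9}Q^{7/9}$ versus $M$.

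Second, your necessity endpoints are swapped, and the endpoint value is wrong. One has $t^\ast=\log(1+\sqrt2)\Leftrightarrow x=\sinh(t^\ast)=1$ (that is, $b/a\to0$), not $x=1/\sqrt2$. A Taylor expansion at $x\to0^+$ gives the ratio $(\log Q-\log M)/(\log Q-\log H)\to 2/9$, so it is the \emph{small-$x$} regime that forces $\alpha\ge2/9$; as $x\to1^-$ the denominator blows up (since $H\to0$) and the ratio tends to $0$, which forces $\beta\le0$.

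The paper avoids the $\varphi_p$ detour entirely. It forms the single ratio
\[
f(t)=\frac{\log[\cosh t]-\log[\sinh t/t]}{\log[\cosh t]-\log[1-\sinh^2 t]}=\frac{f_1(t)}{f_2(t)},
\]
computes $f_1'(t)/f_2'(t)=\phi(t)$ exactly as in Lemma~\ref{le:2.3}, and then Lemma~\ref{le:2.1} plus Lemma~\ref{le:2.3} give that $f$ is strictly decreasing on $(0,t^\ast)$ with $f(0^+)=2/9$ and $f(t^\ast{}^-)=0$. Both sufficiency and necessity then fall out at once from $\beta<f(t)<\alpha$. Your monotonicity idea for $\log M-\tfrac29\log H-\tfrac79\log Q$ is in fact equivalent (its derivative is $-f_1'+\tfrac29 f_2'$, positive precisely because $\phi(t)<\phi(0^+)=2/9$), but routing through the ratio $f$ is cleaner and delivers the sharp constants without a separate necessity computation.
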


\begin{proof} First we take the logarithm of each member of
(\ref{eqn:3.1}) and next rearrange terms to obtain
\begin{equation}\label{eqn:3.2}
\beta<\frac{\log[Q(a,b)]-\log[M(a,b)]}{\log[Q(a,b)]-\log[H(a,b)]}<\alpha.
\end{equation}
Note that
\begin{equation}\label{eqn:3.3}
\frac{M(a,b)}{A(a,b)}=\frac{x}{\sinh^{-1}(x)},\quad
\frac{H(a,b)}{A(a,b)}=1-x^2,\quad
\frac{Q(a,b)}{A(a,b)}=\sqrt{1+x^2}.
\end{equation}
Use of (\ref{eqn:3.3}) followed by a substitution
$x=\sinh(t)(0<t<t^*)$, inequality (\ref{eqn:3.2}) becomes
\begin{equation}\label{eqn:3.4}
\beta<f(t)<\alpha,
\end{equation}
where
\begin{equation}\label{eqn:3.5}
f(t)=\frac{\log[\cosh(t)]-\log[\sinh(t)/t]}{\log[\cosh(t)]-\log[1-\sinh^2(t)]}:=\frac{f_{1}(t)}{f_{2}(t)}.
\end{equation}

In order to use Lemma 2.1, we consider the following
\begin{equation}\label{eqn:3.6}
\frac{f_{1}'(t)}{f_{2}'(t)}=\frac{[3-\cosh(2t)][\sinh(2t)-2t]}{2t\sinh^{2}(t)[5+\cosh(2t)]}:=\phi(t),
\end{equation}
where $\phi(t)$ is defined as in Lemma 2.3.

It follows from Lemmas 2.1 and 2.3 together with (\ref{eqn:3.6})
that
\begin{equation*}
f(t)=\frac{f_{1}(t)}{f_{2}(t)}=\frac{f_{1}(t)-f_{1}(0^+)}{f_{2}(t)-f_{2}(0)}
\end{equation*}
is strictly decreasing on $(0,t^*)$. This in turn implies that
\begin{equation}\label{eqn:3.7}
\lim_{t\rightarrow 0^+}f(t)=\frac{2}{9},\quad \lim_{t\rightarrow
t^*}f(t)=0.
\end{equation}

Making use of (\ref{eqn:3.7}) and the monotonicity of $\phi(t)$ we
conclude that in order for the double inequality (\ref{eqn:3.1}) to
be valid it is necessary and sufficient that $\alpha\geq 2/9$ and
$\beta\leq 0$.
\end{proof}

\bigskip
\begin{theorem}\label{th:3.2}The two-sided inequality
\begin{equation}\label{eqn:3.8}
G^{\alpha}(a,b)Q^{1-\alpha}(a,b)<M(a,b)<G^{\beta}(a,b)Q^{1-\beta}(a,b)
\end{equation}
holds true for all $a,b>0$ with $a\neq b$ if and only if $\alpha\geq
1/3$ and $\beta\leq 0$.
\end{theorem}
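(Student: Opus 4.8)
The plan is to mirror the proof of Theorem~\ref{th:3.1}, replacing the harmonic mean $H$ by the geometric mean $G$. Taking logarithms in (\ref{eqn:3.8}) and rearranging, the double inequality is equivalent to
\begin{equation*}
\beta<\frac{\log[Q(a,b)]-\log[M(a,b)]}{\log[Q(a,b)]-\log[G(a,b)]}<\alpha .
\end{equation*}
Using the identities in (\ref{eqn:3.3}) together with $G(a,b)/A(a,b)=\sqrt{1-x^2}$ and the substitution $x=\sinh(t)$ with $t\in(0,t^*)$, this becomes $\beta<g(t)<\alpha$, where
\begin{equation*}
g(t)=\frac{\log[\cosh(t)]-\log[\sinh(t)/t]}{\log[\cosh(t)]-\tfrac12\log[1-\sinh^2(t)]}:=\frac{g_{1}(t)}{g_{2}(t)} .
\end{equation*}
Note $g_1=f_1$ from (\ref{eqn:3.5}); only the denominator has changed.

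Next I would apply Lemma~\ref{le:2.1} to the pair $g_1,g_2$. One checks $g_1(0^+)=g_2(0)=0$, computes $g_2'(t)=\tanh(t)+\sinh(t)\cosh(t)/(1-\sinh^2(t))$, and forms the quotient $g_1'(t)/g_2'(t)$. I expect this quotient to simplify, after clearing hyperbolic denominators, to an expression of the shape $\psi(t)=\dfrac{[\,\cdot\,](\sinh(2t)-2t)}{2t\sinh^2(t)[\,\cdot\,]}$ analogous to $\phi$ in (\ref{eqn:3.6}); the crucial point is to establish that $\psi$ is strictly monotone (presumably decreasing) on $(0,t^*)$. This is the step that plays the role of Lemma~\ref{le:2.3}, and it is where I expect the real work to lie: either the quotient factors nicely enough that a short power-series / sign argument via Lemma~\ref{le:2.2} settles it, or one must run the same piecewise-monotonicity-plus-endpoint-derivative trick used in Lemma~\ref{le:2.3}, evaluating the relevant derivative at $t^*$ using (\ref{eqn:2.9})--(\ref{eqn:2.10}) to rule out a turning point inside $(0,t^*)$. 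Since the theorem as stated only needs monotonicity on the half-open interval $(0,t^*)$ rather than on all of $(0,\infty)$, the endpoint-derivative shortcut should suffice even if global monotonicity fails.

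Granting that $\psi$ is strictly decreasing on $(0,t^*)$, Lemma~\ref{le:2.1} gives that $g(t)=\big(g_1(t)-g_1(0^+)\big)/\big(g_2(t)-g_2(0)\big)$ is strictly decreasing on $(0,t^*)$. It then remains to compute the two limits. As $t\to0^+$, a Taylor expansion of numerator and denominator (both $\sim t^2$) yields $\lim_{t\to0^+}g(t)=1/3$: indeed $g_1(t)=\tfrac12 t^2+o(t^2)$ while $g_2(t)=\cosh\log\text{-terms}$ give $g_2(t)=\tfrac32 t^2+o(t^2)$. As $t\to t^{*}$ we have $\cosh(t^*)=\sqrt2$, $\sinh(t^*)=1$, so $1-\sinh^2(t^*)=0$, forcing $g_2(t^*)=+\infty$ while $g_1(t^*)$ is finite and nonzero; hence $\lim_{t\to t^*}g(t)=0$. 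Therefore the range of $g$ on $(0,t^*)$ is exactly the open interval $(0,1/3)$, and the best constants are $\alpha=1/3$, $\beta=0$ — with the inequalities $\alpha\ge 1/3$, $\beta\le 0$ being necessary and sufficient, precisely as claimed. The main obstacle, to repeat, is verifying the monotonicity of the auxiliary quotient $\psi$; everything else is the same boilerplate as in Theorem~\ref{th:3.1}.
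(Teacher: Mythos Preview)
Your approach is correct and is exactly the paper's. The step you flag as the main obstacle is in fact \emph{simpler} here than in Theorem~\ref{th:3.1}: one finds
\[
\frac{g_1'(t)}{g_2'(t)}=\frac{[3-\cosh(2t)][\sinh(2t)-2t]}{8t\sinh^2(t)},
\]
and after expanding numerator and denominator as power series in $t^2$ the coefficient ratio is $(n+3-2^{2n+1})/(2n+3)$, which is strictly decreasing for all $n\ge0$; hence Lemma~\ref{le:2.2}(1) applies directly and the piecewise-plus-endpoint trick of Lemma~\ref{le:2.3} is not needed.
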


\begin{proof} We will follows lines introduced in the proof of Theorem 3.1.
We take the logarithm of each member of (\ref{eqn:3.8}) and next
rearrange terms to get
\begin{equation}\label{eqn:3.9}
\beta<\frac{\log[Q(a,b)]-\log[M(a,b)]}{\log[Q(a,b)]-\log[G(a,b)]}<\alpha.
\end{equation}
Use of (\ref{eqn:3.3}) and $G(a,b)/A(a,b)=\sqrt{1-x^2}$ followed by
a substitution $x=\sinh(t)(0<t<t^*)$, inequality (\ref{eqn:3.9}) is
equivalent to
\begin{equation}\label{eqn:3.10}
\beta<g(t)<\alpha,
\end{equation}
where
\begin{equation}\label{eqn:3.11}
g(t)=\frac{\log[\cosh(t)]-\log[\sinh(t)/t]}{\log[\cosh(t)]-\log[1-\sinh^2(t)]/2}:=\frac{g_{1}(t)}{g_{2}(t)}.
\end{equation}
Equation (\ref{eqn:3.11}) leads to
\begin{align}\label{eqn:3.12}
\frac{g_{1}'(t)}{g_{2}'(t)}=&\frac{[3-\cosh(2t)][\sinh(2t)-2t]}{8t\sinh^{2}(t)}
=\frac{\sum\limits_{n=1}^{\infty}[2^{2n+1}(2n+4-2^{2n})/(2n+1)!]t^{2n+1}}{\sum\limits_{n=1}^{\infty}[2^{2n+2}/(2n)!]t^{2n+1}}\nonumber\\
=&\frac{\sum\limits_{n=0}^{\infty}[2^{2n+4}(n+3-2^{2n+1})/(2n+3)!]t^{2n}}{\sum\limits_{n=0}^{\infty}[2^{2n+4}/(2n+2)!]t^{2n}}
:=\frac{\sum\limits_{n=0}^{\infty}a_{n}'t^{2n}}{\sum\limits_{n=0}^{\infty}b_{n}'t^{2n}},
\end{align}
\begin{equation}\label{eqn:3.13}
\frac{a_{n+1}'}{b_{n+1}'}-\frac{a_{n}'}{b_{n}'}=-\frac{3+(6n+7)2^{2n+1}}{(2n+3)(2n+5)}<0
\end{equation}
for all $n\in\{0,1,2,\cdots\}$.

It follows from Lemmas 2.1(1) and  (\ref{eqn:3.12}) together with
(\ref{eqn:3.13}) that $g_{1}'(t)/g_{2}'(t)$ is strictly decreasing
on $(0,t^*)$.

From Lemma 2.1 and (\ref{eqn:3.11}) together with
$g_{1}(0^+)=g_{2}(0)=0$ and the monotonicity of
$g_{1}'(t)/g_{2}'(t)$ we clearly see that $g(t)$ is strictly
decreasing on $(0,t^*)$.

Therefore, Theorem 3.2 follows from the monotonicity of $g(t)$ and
(\ref{eqn:3.10}) together with the fact that
\begin{equation*}
\lim_{t\rightarrow 0^+}g(t)=\frac{1}{3},\quad \lim_{t\rightarrow
t^*}g(t)=0.
\end{equation*}
\end{proof}

\begin{theorem}\label{th:3.2}The following simultaneous inequality
\begin{equation}\label{eqn:3.14}
H^{\alpha}(a,b)C^{1-\alpha}(a,b)<M(a,b)<H^{\beta}(a,b)C^{1-\beta}(a,b)
\end{equation}
holds true for all $a,b>0$ with $a\neq b$ if and only if $\alpha\geq
5/12$ and $\beta\leq 0$.
\end{theorem}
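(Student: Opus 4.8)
The plan is to follow the same template used in the proofs of Theorems~3.1 and~3.2. First I would take logarithms of each member of \eqref{eqn:3.14} and rearrange to reduce the claim to
\begin{equation*}
\beta<\frac{\log[C(a,b)]-\log[M(a,b)]}{\log[C(a,b)]-\log[H(a,b)]}<\alpha .
\end{equation*}
Using the normalizations in \eqref{eqn:3.3} together with $C(a,b)/A(a,b)=1+x^2$ and the substitution $x=\sinh(t)$ with $t\in(0,t^*)$, this becomes $\beta<h(t)<\alpha$ where
\begin{equation*}
h(t)=\frac{\log[1+\sinh^2(t)]-\log[\sinh(t)/t]}{\log[1+\sinh^2(t)]-\log[1-\sinh^2(t)]}:=\frac{h_{1}(t)}{h_{2}(t)},
\end{equation*}
and note $h_1(0^+)=h_2(0)=0$, so Lemma~2.1 applies once we understand the ratio $h_1'(t)/h_2'(t)$.

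The core computation is to differentiate and simplify $h_1'(t)/h_2'(t)$. Since $1+\sinh^2(t)=\cosh^2(t)$, we have $\log[1+\sinh^2(t)]=2\log\cosh(t)$, so $h_1'(t)=2\tanh(t)-\coth(t)+1/t$ and $h_2'(t)=2\tanh(t)+2\sinh(t)\cosh(t)/(1-\sinh^2(t))$. After clearing denominators and using double-angle identities I expect the quotient to take the form
\begin{equation*}
\frac{h_1'(t)}{h_2'(t)}=\frac{[3-\cosh(2t)][\,\text{(something like }\sinh(2t)-2t)\,]}{(\text{a positive polynomial-in-}\cosh,\sinh\text{ times }t\sinh^2 t)}:=\psi(t),
\end{equation*}
structurally parallel to the function $\phi(t)$ in Lemma~2.3 (indeed the numerator should again carry the factor $[3-\cosh(2t)]$, which is positive on $(0,t^*)$). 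The plan is then to expand numerator and denominator in power series via $\sinh$ and $\cosh$, obtain $\psi(t)=\sum a_n t^{2n}/\sum b_n t^{2n}$ with $b_n>0$, compute $c_n=a_n/b_n$, and show the sequence $\{c_n\}$ is eventually monotone (most likely decreasing then increasing, as in Lemma~2.3, or perhaps monotone throughout as in Theorem~3.2). Lemma~2.2(2) then gives piecewise monotonicity of $\psi$ on $(0,\infty)$; to pin down that $\psi$ (hence $h_1'/h_2'$) is strictly monotone on the relevant interval $(0,t^*)$, I would evaluate $\psi'(t^*)$ using the special values \eqref{eqn:2.9}--\eqref{eqn:2.10} and check its sign, exactly as was done for $\phi$ in \eqref{eqn:2.14}. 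With $h_1'/h_2'$ monotone on $(0,t^*)$, Lemma~2.1 yields that $h(t)$ is strictly decreasing there.

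Finally I would compute the two endpoint limits: a Taylor expansion at $t=0$ should give $\lim_{t\to0^+}h(t)=5/12$, and using the special values at $t^*=\log(1+\sqrt{2})$ (where $\sinh t^*=1$, so the denominator $\log[1-\sinh^2 t^*]\to-\infty$) gives $\lim_{t\to t^*}h(t)=0$. Combining the strict monotonicity of $h$ with these limits shows that $\beta<h(t)<\alpha$ holds for all $t\in(0,t^*)$ exactly when $\alpha\ge 5/12$ and $\beta\le 0$, which is the assertion. The main obstacle I anticipate is the algebraic reduction of $h_1'/h_2'$ to a clean closed form and the verification that the coefficient sequence $\{c_n\}$ has the right eventual monotonicity — in particular, handling the small-index terms by hand (as in \eqref{eqn:2.7}) and bounding the general term (as in \eqref{eqn:2.8}); a secondary subtlety is that $h_2(t)$ involves $1-\sinh^2 t$, which changes sign at $t^*$, so one must be careful that all series manipulations and the application of Lemma~2.2 are restricted appropriately, and that the endpoint behavior at $t^*$ is extracted by a genuine limit rather than by substitution.
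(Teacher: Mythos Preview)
Your plan is correct and matches the paper's proof essentially step for step: the same logarithmic rearrangement, the same substitution $x=\sinh t$, the same reduction via Lemma~2.1 to the monotonicity of $h_1'(t)/h_2'(t)$, a power-series expansion handled by Lemma~2.2, and the same endpoint limits $5/12$ and $0$. The only point where reality is simpler than you anticipate is that the quotient works out to $h_1'(t)/h_2'(t)=\dfrac{[3-\cosh(2t)][\sinh(2t)+t\cosh(2t)-3t]}{16\,t\sinh^{2}(t)}$, and the resulting coefficient ratio $c_n'/d_n'$ is strictly decreasing for all $n\ge 0$, so Lemma~2.2(1) applies directly and no sign check of $\psi'(t^*)$ is needed; moreover, after this simplification both numerator and denominator are entire, so your worry about $1-\sinh^2 t$ changing sign does not interfere with the series argument.
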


\begin{proof} We take the logarithm of each member of (\ref{eqn:3.14}) and next
rearrange terms to get
\begin{equation}\label{eqn:3.15}
\beta<\frac{\log[C(a,b)]-\log[M(a,b)]}{\log[C(a,b)]-\log[H(a,b)]}<\alpha.
\end{equation}
Use of (\ref{eqn:3.3}) and $C(a,b)/A(a,b)=1+x^2$ followed by a
substitution $x=\sinh(t)(0<t<t^*)$, inequality (\ref{eqn:3.15})
becomes
\begin{equation}\label{eqn:3.16}
\beta<h(t)<\alpha,
\end{equation}
where
\begin{equation}\label{eqn:3.17}
h(t)=\frac{\log[\cosh(t)]-\log[\sinh(t)/t]/2}{\log[\cosh(t)]-\log[1-\sinh^2(t)]/2}:=\frac{h_{1}(t)}{h_{2}(t)}.
\end{equation}
Equation (\ref{eqn:3.17}) gives
\begin{align}\label{eqn:3.18}
\frac{h_{1}'(t)}{h_{2}'(t)}=&\frac{[3-\cosh(2t)][\sinh(2t)+t\cosh(2t)-3t]}{16t\sinh^{2}(t)}\nonumber\\
=&\frac{\sum\limits_{n=0}^{\infty}\left[2^{2n+3}\left((3-2^{2n})(2n+3)+3-2^{2n+2}\right)/(2n+3)!\right]t^{2n}}
{\sum\limits_{n=0}^{\infty}[2^{2n+5}/(2n+2)!]t^{2n}}
:=\frac{\sum\limits_{n=0}^{\infty}c_{n}'t^{2n}}{\sum\limits_{n=0}^{\infty}d_{n}'t^{2n}},
\end{align}
\begin{equation}\label{eqn:3.19}
\frac{c_{n+1}'}{d_{n+1}'}-\frac{c_{n}'}{d_{n}'}=-3\times
2^{2n-2}-\frac{3}{2(2n+3)(2n+5)}-\frac{(6n+7)2^{2n}}{(2n+3)(2n+5)}<0
\end{equation}
for all $n\in\{0,1,2,\cdots\}$.

It follows from Lemmas 2.2(1) and  (\ref{eqn:3.18}) together with
(\ref{eqn:3.19}) that $h_{1}'(t)/h_{2}'(t)$ is strictly decreasing
on $(0,t^*)$.

From Lemma 2.1 and (\ref{eqn:3.17}) together with
$h_{1}(0^+)=h_{2}(0)=0$ and the monotonicity of
$h_{1}'(t)/h_{2}'(t)$ we clearly see that $h(t)$ is strictly
decreasing on $(0,t^*)$.

Therefore, Theorem 3.3 follows from the monotonicity of $h(t)$ and
(\ref{eqn:3.16}) together with the fact that
\begin{equation*}
\lim_{t\rightarrow 0^+}h(t)=\frac{5}{12},\quad \lim_{t\rightarrow
t^*}h(t)=0.
\end{equation*}
\end{proof}

\begin{theorem}\label{th:3.4}The following inequality
\begin{equation}\label{eqn:3.20}
G^{\alpha}(a,b)C^{1-\alpha}(a,b)<M(a,b)<G^{\beta}(a,b)C^{1-\beta}(a,b)
\end{equation}
is valid for all $a,b>0$ with $a\neq b$ if and only if $\alpha\geq
5/9$ and $\beta\leq 0$.
\end{theorem}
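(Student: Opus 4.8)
The plan is to follow the template established in Theorems 3.1--3.3, so that the whole computation is reduced to Lemma~\ref{le:2.4}. Taking logarithms in \eqref{eqn:3.20} and rearranging, the claim is equivalent to
\begin{equation*}
\beta<\frac{\log[C(a,b)]-\log[M(a,b)]}{\log[C(a,b)]-\log[G(a,b)]}<\alpha
\end{equation*}
for all admissible $a,b$. Using $M/A=x/\sinh^{-1}(x)$, $G/A=\sqrt{1-x^2}$ and $C/A=1+x^2$ from \eqref{eqn:3.3}, and writing the middle quantity as a function of $x\in(0,1)$, one finds after clearing the common denominator $\log(1+x^2)-\tfrac12\log(1-x^2)$ that the left inequality with $\beta=0$ and the right inequality with $\alpha=5/9$ are exactly $\varphi_0(x)>0$ and $\varphi_{5/9}(x)<0$, where $\varphi_p$ is the function in \eqref{eqn:2.15}; indeed $\varphi_p(x)=\bigl[\log(1+x^2)-\log\frac{x}{\sinh^{-1}(x)}\bigr]-p\bigl[\log(1+x^2)-\tfrac12\log(1-x^2)\bigr]$ has the sign of (numerator) $-p\cdot$(denominator). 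So the sufficiency of $\alpha\ge5/9$, $\beta\le0$ is immediate from Lemma~\ref{le:2.4}.

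For necessity I would set $F(x)$ equal to the middle ratio above and check that $F(x)$ ranges over an interval whose supremum is $5/9$ and whose infimum is $0$. The limiting values are the crucial point: a Taylor expansion at $x\to0^+$ (equivalently $t\to0^+$ after $x=\sinh t$) gives $\log(1+x^2)-\log\frac{x}{\sinh^{-1}(x)}=\tfrac{5}{6}x^2+o(x^2)$ while $\log(1+x^2)-\tfrac12\log(1-x^2)=\tfrac32x^2+o(x^2)$, so $\lim_{x\to0^+}F(x)=\tfrac{5/6}{3/2}=5/9$; and as $x\to1^-$ the denominator $\to+\infty$ while the numerator $\to\log2-\log\frac{1}{\log(1+\sqrt2)}$ stays finite, so $\lim_{x\to1^-}F(x)=0$. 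Hence if \eqref{eqn:3.20} holds for all $a\ne b$ then necessarily $\alpha\ge\sup F=5/9$ and $\beta\le\inf F=0$. If one wants the clean statement that $F$ actually \emph{takes} all values in $(0,5/9)$ (so that the endpoints are genuinely the best constants and not merely bounds), it suffices to note $F$ is continuous on $(0,1)$; monotonicity of $F$ is not strictly needed for the ``if and only if'' but, if desired, it would follow from Lemma~\ref{le:2.4} applied with the free parameter, exactly as $\phi$ was used in Theorem~3.1.

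The main obstacle is simply organizing the algebra so that the ratio $F(x)$ collapses onto $\varphi_p$ with the denominator $\log(1+x^2)-\tfrac12\log(1-x^2)$ rather than some other combination; once that identification is made, Lemma~\ref{le:2.4} does all the analytic work and there is nothing left to prove beyond the two elementary limit computations. I would therefore present the proof in three short moves: (i) reduce \eqref{eqn:3.20} to $\varphi_{5/9}(x)<0<\varphi_0(x)$ via \eqref{eqn:3.3} and invoke Lemma~\ref{le:2.4} for sufficiency; (ii) compute $\lim_{x\to0^+}F(x)=5/9$ and $\lim_{x\to1^-}F(x)=0$; (iii) conclude necessity of $\alpha\ge5/9$, $\beta\le0$ from these limits together with the continuity of $F$.
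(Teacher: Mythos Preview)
Your proposal is correct and follows essentially the same approach as the paper: reduce \eqref{eqn:3.20} to the sign of $\varphi_p(x)$ from \eqref{eqn:2.15} and invoke Lemma~\ref{le:2.4} for sufficiency, then compute the limits $5/9$ and $0$ of the ratio $F(x)$ at $x\to0^+$ and $x\to1^-$ to obtain necessity. The paper organizes these steps in a slightly different order (limits first, then Lemma~\ref{le:2.4}, then the necessity argument phrased as ``there exists $\delta$ such that the inequality fails''), but the substance is identical; your remark that monotonicity of $F$ is not needed is also in line with the paper, which does not prove it either.
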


\begin{proof} Making use of (\ref{eqn:3.3}) and
$C(a,b)/A(a,b)=1+x^2$ together with  $G(a,b)/A(a,b)=\sqrt{1-x^2}$ we
get
\begin{equation}\label{eqn:3.21}
\frac{\log[C(a,b)]-\log[M(a,b)]}{\log[C(a,b)]-\log[G(a,b)]}
=\frac{\log(1+x^2)-\log[x/\sinh^{-1}(x)]}{\log(1+x^2)-\log\sqrt{1-x^2}}.
\end{equation}

Elaborated computations lead to
\begin{equation}\label{eqn:3.22}
\lim_{x\rightarrow
0^+}\frac{\log(1+x^2)-\log[x/\sinh^{-1}(x)]}{\log(1+x^2)-\log\sqrt{1-x^2}}=\frac{5}{9},
\end{equation}
\begin{equation}\label{eqn:3.23}
\lim_{x\rightarrow
1^-}\frac{\log(1+x^2)-\log[x/\sinh^{-1}(x)]}{\log(1+x^2)-\log\sqrt{1-x^2}}=0.
\end{equation}

Taking the logarithm of (\ref{eqn:3.20}), we consider the difference
between the convex combination of $\log{G(a,b)}$, $\log{C(a,b)}$ and
$\log{M(a,b)}$ as follows
\begin{align}\label{eqn:3.24}
&p\log{G(a,b)}+(1-p)\log{C(a,b)}-\log{M(a,b)}\nonumber\\
=&p\log\sqrt{1-x^2}+(1-p)\log(1+x^2)-\log\frac{x}{\sinh^{-1}(x)}=\varphi_{p}(x),
\end{align}
where $\varphi_{p}(x)$ is defined as in Lemma 2.4.

Therefore, $G^{5/9}(a,b)C^{4/9}(a,b)<M(a,b)<C(a,b)$ for all $a,b>0$
with $a\neq b$ follows from (\ref{eqn:3.24}) and Lemma 2.4. This in
conjunction with the following statements gives the asserted result.
\setlength\leftmargini{.3cm}
\begin{itemize}
\item If $\alpha<5/9$, then equations (\ref{eqn:3.21}) and (\ref{eqn:3.22})
lead to the conclusion that there exists $0<\delta_1<1$ such that
$M(a,b)<G^{\alpha}(a,b)C^{1-\alpha}(a,b)$ for all $a,b>0$ with
$(a-b)/(a+b)\in(0,\delta_1)$.

\item If $\beta>0$, then equations  (\ref{eqn:3.21}) and (\ref{eqn:3.23}) imply that
there exists $0<\delta_2<1$ such that
$M(a,b)>G^{\beta}(a,b)C^{1-\beta}(a,b)$ for all $a,b>0$ with
$(a-b)/(a+b)\in(1-\delta_2,1)$.
\end{itemize}
\end{proof}

\end{document}